\newtheorem{thr}{Theorem}[section]
\newtheorem{q}[thr]{Question}
\newtheorem{lemma}[thr]{Lemma}
\newtheorem{proposition}[thr]{Proposition}
\newtheorem{conj}[thr]{Conjecture}
\def\zz{\mathbb{Z}}
\title{Punctured intervals tile $\zz^3$}
\author{Stijn Cambie\footnote{Department of Mathematics, Radboud University Nijmegen, Postbus 9010, 6500 GL Nijmegen, The Netherlands. Email: \href{mailto:S.Cambie@math.ru.nl}{S.Cambie@math.ru.nl}. This work has been supported by a Vidi Grant of the Netherlands Organization for Scientific Research (NWO), grant number $639.032.614$.} }%
\date{}
\begin{document}
	\definecolor{xdxdff}{rgb}{0.49019607843137253,0.49019607843137253,1.}
	\definecolor{ududff}{rgb}{0.30196078431372547,0.30196078431372547,1.}

	\maketitle

	\begin{abstract}
		Extending the methods of Metrebian (2018), we prove that punctured intervals tile $\zz^3$. This solves two questions of Metrebian and completely resolves a question of Gruslys, Leader and Tan. 
		We also pose a question that asks whether there is a relation between the genus $g$ (number of holes) in a one-dimensional tile $T$ and a uniform bound $d$ such that $T$ tiles $\zz^d$. 
		An affirmative answer would generalize a conjecture of Gruslys, Leader and Tan (2016). 
\end{abstract}

\section{Introduction}

	Given $n$, let $T$ be a tile in $\zz^n$, i.e.~a finite subset of $\zz^n$.
	The cardinality of $T$, $\lvert T \rvert$, is the size of $T$, i.e. the number of elements of the subset.
	Confirming a conjecture of Chalcraft that was posed on MathOverflow, Gruslys, Leader and Tan~\cite{GLT} showed that $T$ 
	tiles $\zz^d$ for some $d$.
	This is an existence result and they wondered about better bounds in terms of the dimension $n$ and the size $\lvert T \rvert.$
	They conjectured the following for the case $n=1$.
	
	\begin{conj}[Gruslys, Leader, Tan~\cite{GLT}] \label{GLT}
		For any positive integer $t$ there is a number $d$ such that any tile $T$ in $\zz$
		with $\lvert T \rvert =t$ tiles $\zz^d$.
	\end{conj}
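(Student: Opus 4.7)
The plan is to try to reduce Conjecture~\ref{GLT} to the symmetric case handled by the paper's main theorem. Given a general tile $T\subset\zz$ with $|T|=t$, normalise so that $0,N\in T$ with $N=\max T$. The natural symmetric companion of $T$ is its symmetrisation $S := T \cup (N-T)\subseteq[0,N]$, which is a symmetric subset of an interval, i.e.\ a symmetric punctured interval, and hence tiles $\zz^3$ by the main result. My first goal is to lift a tiling of $\zz^3$ by $S$ to a tiling of $\zz^{3+c(t)}$ by $T$ alone, using $c(t)$ additional coordinates for some function $c$ depending only on $t$.

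The intended lifting mechanism is to use the auxiliary coordinates to split each placed copy of $S$ into a ``$T$-layer'' and an ``$(N-T)$-layer''. In the simplest instance one adds one extra coordinate of length~$2$, placing $T$ on layer $0$ and a reflected copy of $T$ on layer $1$, so that a single $S$-tile in $\zz^3$ becomes two $T$-tiles stacked in $\zz^3\times\zz$. Iterating this across several auxiliary directions, together with standard product-tile arguments, should then produce the desired tiling of $\zz^{3+c(t)}$.

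The first serious obstacle is that $T$ and its reflection $N-T$ typically overlap inside $[0,N]$---at the very least they share $\{0,N\}$---so $S$ is \emph{not} a disjoint union $T\sqcup(N-T)$, and the na\"ive layer-swap construction must be refined to avoid double-counting the overlap. A plausible fix is to replace $T$ by a suitably rescaled copy inside $[0,2N]$, where the original and its reflection are disjoint, perform the unfolding there, and then trade the rescaling for one further tiling coordinate.

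The more fundamental obstacle, which is ultimately why Conjecture~\ref{GLT} remains open in general, is that the number of auxiliary coordinates required by this kind of reduction seems to grow with the diameter $N$ rather than with $t$. Since the conjecture demands $d=d(t)$ independent of $N$, the reduction strategy above does not on its own suffice. Overcoming this would likely require a construction that works directly at the level of $T$, using only the combinatorial data of its $t$ elements (for instance via lattice- or character-theoretic tiling conditions on $\zz^d$) rather than going through $S$; I expect this quantitative step to be where the real difficulty lies.
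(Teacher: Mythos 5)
First, a point of order: Conjecture~\ref{GLT} is quoted in the paper as an open conjecture of Gruslys, Leader and Tan; the paper does not prove it, and Section~\ref{4} only records speculative remarks towards it. So there is no proof of this statement in the paper to compare against, and your proposal --- as you yourself concede in its closing paragraph --- is not a proof either, but a reduction strategy that you acknowledge cannot produce a dimension depending only on $t$. That self-diagnosis is accurate, and it already means the attempt fails.

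There is also a concrete error at the very first step of the reduction. The symmetrisation $S = T \cup (N-T)$ is a symmetric subset of $\{0,1,\ldots,N\}$, but it is not in general a symmetric \emph{punctured interval}: Theorem~\ref{main} applies only to tiles of the form $k(1)k$, a full interval with a single missing cell, whereas $S$ can have many gaps of arbitrary sizes. For instance $T=\{0,1,4,9\}$ gives $S=\{0,1,4,5,8,9\}$, i.e.\ the tile $2(2)2(2)2$ in the paper's notation, which is not covered by Theorem~\ref{main} nor by any other result in the paper; indeed Section~\ref{1a} shows that tiles with many gaps, such as $D_n$, can fail to tile $\zz^d$ for any fixed $d$. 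So the claim that $S$ ``tiles $\zz^3$ by the main result'' is unfounded. The subsequent lifting step is also unsubstantiated: a tiling of $\zz^3$ by $S$ places copies of $S$ along all three coordinate axes, and no argument is given that splitting each copy into a $T$-part and a reflected $T$-part in auxiliary coordinates yields a partition rather than a cover with overlaps or holes, even after the proposed rescaling that separates $T$ from $N-T$. In short: the first step invokes a theorem that does not apply, the middle steps are not carried out, and the final quantitative step (making $d$ depend on $t$ rather than on $\max T$) is admitted to be out of reach. The conjecture remains open, exactly as the paper states.
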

	
	Let us note that Adler and Holroyd~\cite{AH} had earlier investigated which tiles in $\zz$ can tile $\zz$.
	When dealing with one-dimensional tiles, we find it convenient to use the same notation as in~\cite{AH}: a tile $T$ in $\zz$ which is the union of $n$ intervals $I_1$ up to $I_n$, such that the length of interval $I_i$ is $a_i$ and the gap between $I_i$ and $I_{i+1}$ is $b_i$, will be denoted by $a_1(b_1)a_2(b_2)a_3 \ldots (b_{n-1})a_n$.
	We will call $g=n-1$ the genus of the interval $T$.
	
	Wondering about Conjecture~\ref{GLT}, one may wonder if the dimension $d$ only depends on the genus of the tile instead of the size. Leading to the following question.
	
	\begin{q}\label{q:genus_tiling}
		Does there exist a function $d \colon \mathbb N \to \mathbb N$ such that any tile $T \subset \zz$ with genus $g$ tiles $\zz^{d(g)}?$
	\end{q}
	Answering this affirmatively, would confirm Conjecture~\ref{GLT} since $g \le t-1.$ As observed in e.g.~\cite{GLT}, for any fixed $d$, there are one-dimensional tiles with large genus which cannot tile $\zz^d$, see Section~\ref{1a}. In particular we observe that $d(g)\ge \frac g2 +1$ by taking $k \to \infty$ in Proposition~\ref{prop:TkgDn}.
	We make progress on Question~\ref{q:genus_tiling} for the case $g=1$ by establishing that one-dimensional tiles $T=k(m)\ell$ with $m \le 2$ do tile $\zz^3$, as sketched in Appendix~\ref{5}.
	In full detail, we prove that punctured intervals do tile $\zz^3$ as our main result. 
	\begin{thr}\label{main}
		Every punctured interval $T=k(1)\ell$ does tile $\zz^3.$
	\end{thr}
	
	This theorem answers two concrete questions posed by Metrebian~\cite[Qu.~10,11]{HM}.
	As a corollary the least $d$ for which $T=k(1)k$ tiles $\zz^d$ equals $\min \{k,3\},$ answering~\cite[Qu.~21]{GLT}.

	In Section~\ref{2} we prove a lemma implying that it is enough to find some structured partial tilings of $\zz^2$ to prove tiles do tile $\zz^3.$ 
	In Section~\ref{3}, we exhibit such partial tilings for punctured intervals. 
	Some divisibility constraints for specific constructions make this a delicate task. For the symmetric tiles $T=k(1)k$ the construction depends on $v_2(k)$, the exponent of $2$ in the prime factorization of k. So we create infinitely many families of constructions. This is done in Lemma~\ref{v_2_case}. Metrebian~\cite{HM} did have such examples already when $v_2(k)\in \{0,2\}.$ 
	An important ingredient to prove the validity of this infinite family of partial tilings is some elementary number theory.
	In Lemma~\ref{k1lcase} we give constructions for asymmetric tiles $T=k(1)\ell$ where $k \not= \ell.$ So Lemma~\ref{v_2_case} and Lemma~\ref{k1lcase} together imply Theorem~\ref{main}.

\section{From partial to complete tilings}\label{2}


In this section, we will prove that finding certain partial tilings is enough to conclude that a whole tiling does exist.
This is done in Lemma~\ref{lemma4gen} which is a generalization of Lemma~$4$ in~\cite{HM}.

\begin{lemma}\label{lemma4gen}
Let $T$ be the one-dimensional tile $k(m)\ell$. 
Suppose there are three disjoint subsets $A, B, C$ of $\zz^d$ with the same cardinality such that one can tile $\zz^d \setminus (A \cup B)$, $\zz^d \setminus (A \cup C)$ and $\zz^d \setminus (B \cup C)$ with $T$. Then $T$ tiles $\zz^{d+1}$.
\end{lemma}

\begin{proof}
First assume $m< \min\{k,\ell\}.$
We construct a subset $Y \subset \zz \times \{0,1,2\}$ such that $\lvert Y \cap \left( \{z\} \times \{0,1,2\} \right) \rvert =2$ for every $z \in \zz$ and such that $T$ tiles $Y.$
Let $(x,i) \in Y$ for some $x \in \zz$ and $i \in \{0,1,2\}$ if and only if
\begin{align*}
x-i(k+l) &\equiv 1,2,\ldots, k; k+m+1,k+m+2 \ldots, k+m+\ell \pmod{3k+3\ell} \mbox{ or}\\
 &\equiv 2k+\ell+1, \ldots, 2k+2\ell;2k+2\ell+m+1,\ldots,3k+2\ell+m \pmod{3k+3\ell}.
\end{align*}

The construction has been sketched in Figure~\ref{fig:VertCon} for $\{1,2,\ldots, 3(k+\ell)\}\times \{0,1,2\} $. By gluing infinitely many copies of that picture together, one gets the full construction of $Y$.

\begin{figure}[h]
	\centering
	\begin{tikzpicture}[scale=0.82]

	\draw[fill=black!50!white] \foreach \x in {0,11} {
		\foreach \a in {0} { 
			
			(\x,\a) rectangle (\x+2,\a+0.5)
		}
	};
	
	\draw[fill=black!50!white] \foreach \x in {3,7} {
		\foreach \a in {0} { 
			
			(\x,\a) rectangle (\x+3,\a+0.5)
		}
	};
	
	\draw[fill=black!50!white] \foreach \x in {1,5} {
		\foreach \a in {1} { 
			
			(\x,\a) rectangle (\x+2,\a+0.5)
		}
	};
	
	\draw[fill=black!50!white] \foreach \x in {8,12} {
		\foreach \a in {1} { 
			
			(\x,\a) rectangle (\x+3,\a+0.5)
		}
	};
	
	\draw[fill=black!50!white] \foreach \x in {10,6} {
		\foreach \a in {2} { 
			
			(\x,\a) rectangle (\x+2,\a+0.5)
		}
	};
	
	\draw[fill=black!70!white] \foreach \x in {13} {
		\foreach \a in {2} { 
			
			(\x,\a) rectangle (\x+2,\a+0.5)
		}
	};
	
	\draw[fill=black!50!white] \foreach \x in {2} {
		\foreach \a in {2} { 
			
			(\x,\a) rectangle (\x+3,\a+0.5)
		}
	};
	
	\draw[fill=black!70!white] \foreach \x in {0} {
		\foreach \a in {2} { 
			
			(\x,\a) rectangle (\x+1,\a+0.5)
		}
	};

	\draw[dotted] (13.25,-0.5)--(13.75,0)--(13.75,3.25)--(13.25,2.75)--(13.25,-0.5);
	\draw[dotted] (12.25,-0.5)--(12.75,0)--(12.75,3.25)--(12.25,2.75)--(12.25,-0.5);
	\draw[dotted] (11.25,-0.5)--(11.75,0)--(11.75,3.25)--(11.25,2.75)--(11.25,-0.5);

	\draw[<->] (0.05,0.65) -- (1.95,0.65){};
	\draw[<->] (2.05,0.65) -- (2.95,0.65){};
	\draw[<->] (3.05,0.65) -- (5.95,0.65){};

	\draw[<->] (6.05,0.65) -- (6.95,0.65){};
	\draw[<->] (7.05,0.65) -- (9.95,0.65){};
	\draw[<->] (10.05,0.65) -- (10.95,0.65){};
	\draw[<->] (11.05,0.65) -- (12.95,0.65){};
	
	\draw[<->] (0.05,1.65) -- (4.95,1.65){};
	\draw[<->] (0.05,2.65) -- (9.95,2.65){};
	\coordinate [label=center:$k$] (A) at (1,0.8); 
	\coordinate [label=center:$m$] (A) at (2.5,0.8); 
	\coordinate [label=center:$\ell$] (A) at (4.5,0.8); 
	
	\coordinate [label=center:\small{$k-m$}] (A) at (6.5,0.8); 
	\coordinate [label=center:$m$] (A) at (10.5,0.8); 
	\coordinate [label=center:$\ell$] (A) at (8.5,0.8); 
	\coordinate [label=center:$k$] (A) at (12,0.8); 
	
	\coordinate [label=center: \small{$k+\ell$}] (A) at (2.5,1.8); 
	\coordinate [label=center:$2(k+\ell)$] (A) at (5,2.8); 	
	\coordinate [label=center:$0$] (A) at (-0.5,0.25); 
	\coordinate [label=center:$1$] (A) at (-0.5,1.25); 
	\coordinate [label=center:$2$] (A) at (-0.5,2.25); 
	
	\coordinate [label=center:$\pi_1$] (A) at (11,-0.75); 
	\coordinate [label=center:$\pi_2$] (A) at (12.5,-0.75); 
	\coordinate [label=center:$\pi_3$] (A) at (13.5,-0.75); 
	\end{tikzpicture}
	\caption{ Construction of $Y$.}
	\label{fig:VertCon}
	
\end{figure}

Now we explain why this construction meets the conditions we need.
Let $S_1= \{1,2,\ldots, k\}$, $S_2= \{k+m+1,k+m+2 \ldots, k+m+\ell\}$, $S_3=\{2k+\ell+1, \ldots, 2k+2\ell\}$ and $S_4=\{2k+2\ell+m+1,\ldots,3k+2\ell+m\}$.
Let $S_o=S_1+S_3$ and $S_e=S_2+S_4$.
Then both $S_o \cup \left( (k+\ell)+S_o \right) \cup \left( 2(k+\ell) + S_o \right)$ and $S_e \cup \left( (k+\ell)+S_e \right) \cup \left(2(k+\ell) + S_e\right)$ cover all elements in $\frac{\zz}{3(k+\ell)\zz}$ exactly once, from which the result follows.

The elements of $A \cup B \cup C$ can be partitioned into triples $\{a_i, b_i, c_i\}$ since $A, B,C$ have the same cardinality.
Every set $\zz \times \{a_i, b_i, c_i\} $ has a subset $Y_i \cong Y$ which can be tiled by $T$ in the same manner, i.e.~there exists a partition $\{Z_1, Z_2, Z_3\}$ of $\zz$ such that for every $i$ we have $Y_i \cap \left( \{z\} \times  \{a_i, b_i, c_i\}\right)= \{a_i, b_i\}$ for every $z \in Z_1$, $Y_i \cap \left(\{z\} \times  \{a_i, b_i, c_i\}\right)= \{a_i, c_i\}$ for every $z \in Z_2$ and $Y_i \cap \left( \{z\} \times  \{a_i, b_i, c_i\}\right)= \{ b_i, c_i\}$ for every $z \in Z_3.$
Now $\zz^{d+1} \setminus \left(\cup_i Y_i \right)$ can be written as $ Z_1 \times \left( \zz^d \setminus (A \cup B) \right)  \cup Z_2 \times \left( \zz^d \setminus (A \cup C) \right) \cup Z_3 \times \left( \zz^d \setminus (B \cup C) \right) $ 
 and by the assumptions this can be tiled by $T$ as well, so $T$ tiles $\zz^{d+1}$.
Looking at Figure~\ref{fig:VertCon}, every hyperplane $\pi_i$ will be covered by the intersections with $\cup_i Y_i$ and a partial tiling isomorphic to one of $\zz^d \setminus (A \cup B),  \zz^d \setminus (A \cup C)$ or $\zz^d \setminus (B \cup C).$

When $m \ge \min\{k,\ell\}$, where we assume without loss of generality $k=\min\{k,\ell\},$ one can glue two copies $T_1, T_2$ of $T$ together to a tile $T'$ with $k'=\ell'=k+\ell$ and $m'=m-k$ by taking $T_1=\{-k,-k+1, \ldots, -1\} \cup \{m,m+1,\ldots m+\ell-1\}$ and $T_2=\{-k-\ell,-k-\ell+1,\ldots,-k-1\} \cup \{ m-k,m-k+1, \ldots,m-1\}$. See Figure~\ref{Reduce} for a depiction.
When $m' \ge k'$, one can glue $\left \lfloor m'/k' +1 \right \rfloor$ copies of $T'$ together, which are translates of $T'$ with initial point at $0,k', \ldots, \left \lfloor m'/k'  \right \rfloor k'$.
Hence we have reduced this to the case which has been proven already.%
\begin{figure}[h]
	\begin{center}
	\begin{tikzpicture}[scale=0.82]
	\draw[fill=black!60!white] 			(0,0) rectangle (2.5,0.6);
	\draw[fill=black!60!white] 			(10,0) rectangle (11.5,0.6);
	\draw[fill=black!40!white] 			(2.5,0) rectangle (4,0.6);
	\draw[fill=black!40!white] 			(11.5,0) rectangle (14,0.6);	
	\draw[fill=black!20!white] 			(4,0) rectangle (8,0.6);
\draw[fill=black!20!white] 			(14,0) rectangle (18,0.6);
	\draw[<->] (0.05,0.75) -- (2.45,0.75){};
	\draw[<->] (2.55,0.75) --(9.95,0.75){};
	\draw[<->] (10.05,0.75) -- (11.45,0.75){};
	\draw[<->] (10.05,-0.4) -- (13.95,-0.4){};
	\draw[<->] (0.05,-0.4) -- (3.95,-0.4){};
	\draw[<->] (4.05,-0.4) -- (9.95,-0.4){};
	\coordinate [label=center:$\ell$] (A) at (1.25,0.95); 
	
	\coordinate [label=center:$m$] (A) at (6,0.95); 
	\coordinate [label=center:$k$] (A) at (10.75,0.95); 
	\coordinate [label=center: $\ell'$=$k+\ell$] (A) at (12,-0.2); 
	\coordinate [label=center: $k'$=$k+\ell$] (A) at (2,-0.2); 
	\coordinate [label=center:$m'$=$m-k$] (A) at (7,-0.2); 	
		\coordinate [label=center: $T_2$] (A) at (1.25,0.25); 
	\coordinate [label=center:$T_1$] (A) at (3.25,0.25); 	
	\coordinate [label=center:$T'_2$] (A) at (6,0.25); 
	\end{tikzpicture}
\end{center}
	\caption{ Gluing $T_1$ and $T_2$ and copies $T'$.}
	\label{Reduce}
\end{figure}
\end{proof}

\section{Punctured intervals tile $\zz^3$ }\label{3}

Throughout this section, we let $T$ be a punctured interval tile, which is the union of an interval of length $k$ and an interval of length $\ell$ with a gap of size $1$. So $T=k(1)\ell$ equals a translate of $\{-k,-k+1,\ldots, -1,1,2,\ldots,\ell\}$ as a subset of $\zz.$ 
By applying Lemma~\ref{lemma4gen}, we will prove that $T$ tiles $\zz^3$ for any $k,\ell.$

As a warm up and for completeness of presentation we construct three partial tilings of the plane satisfying the conditions of Lemma~\ref{lemma4gen} when $T$ is the symmetric punctured interval $k(1)k$ with $k \equiv 1 \pmod 2$. This was also proven in \cite[Thr.~3]{HM}.

\begin{figure}[h]
	\begin{center}
		\begin{tikzpicture}[scale=0.75]
		
		\clip (-0.5, -0.5) rectangle (6.05,6.05);

\foreach \t in {-2,-1,0,1,2,3,4,5,6} { 
	\foreach \a in {-6,-4,-2,0,2,4,6} { 
		\ifthenelse{\a>-\t}{\draw[fill=black!70!white](\t,\t+\a) rectangle (\t+0.5,\a+0.5+\t);}{;}}
};

\foreach \t in {-2,-1,0,1,2,3,4,5,6} { 
	\foreach \a in {-6,-4,-2,0,2,4,6} {  
		\ifthenelse{\a>-\t}{\draw[fill=black!40!white](\t+0.5,\t+\a+0.5) rectangle (\t+1,\a+1+\t);}{;}}
	};

\foreach \t in {-4,-3,-2,-1,0,1,2,3,4,5,6} { 
	\foreach \a in {-6,-4,-2,0,2,4,6} {
		\ifthenelse{\a>-\t}{\draw[fill=black!10!white](\t+0.5,\t+\a) rectangle (\t+1,\a+0.5+\t);}{;}}
};

\draw[fill=black!70!white] 			(1.5,-0.5) rectangle (2,0);
\draw[fill=black!40!white] 			(3,-0.5) rectangle (3.5,0);
\draw[fill=black!10!white] 			(4.5,-0.5) rectangle (5,0);

		\coordinate [label=center:legend:] (A) at (0.5,-0.25); 
\coordinate [label=center:$A$] (A) at (2.25,-0.25); 
\coordinate [label=center:$B$] (A) at (3.75,-0.25); 
\coordinate [label=center:$C$] (A) at (5.25,-0.25);

		\draw[<->] (1.25,1.5) -- (1.25,3){};
		\draw[<->] (0.05,0.85) -- (1.55,0.85){};
		\coordinate [label=center:$k$] (A) at (1.5,2.25); 
		\coordinate [label=center:$k$] (A) at (0.8,1.15);
		\end{tikzpicture}
	\end{center}
	\caption{ Construction of $A,B,C$ for $T=k(1)k$ where $k \equiv 1 \pmod 2$.}
	\label{ABC_1mod2}	
\end{figure}

\begin{figure}[h]
	\begin{center}
		\begin{tikzpicture}[scale=0.75]
		
		\clip (-0.5, -0.5) rectangle (5,6);

		\foreach \t in {-2,-1,0,1,2,3,4,5,6} { 
			\foreach \a in {-6,-4,-2,0,2,4,6} {  
			\draw[fill=black!40!white](\t+0.5,\t+\a+0.5) rectangle (\t+1,\a+1+\t);
				}
		};

		\foreach \t in {-4,-3,-2,-1,0,1,2,3,4,5,6} { 
			\foreach \a in {-6,-4,-2,0,2,4,6} {
				\draw[fill=black!10!white](\t+0.5,\t+\a) rectangle (\t+1,\a+0.5+\t);
			}
		};
	
		\foreach \t in {0,1} { 
		\foreach \a in {-6,-4,-2,0,2,4,6} {
			\draw[fill=red](\t+1,\t+\a) rectangle (\t+2.5,\a+0.5+\t);
			\draw[fill=red](\t+3,\t+\a) rectangle (\t+4.5,\a+0.5+\t);
		}
	};

	\foreach \t in {-2,-1} { 
	\foreach \a in {-6,-4,-2,0,2,4,6} {
		\draw[fill=yellow](\t+1,\t+\a) rectangle (\t+2.5,\a+0.5+\t);
		\draw[fill=yellow](\t+3,\t+\a) rectangle (\t+4.5,\a+0.5+\t);
	}
};

	\foreach \t in {-2,-1} { 
	\foreach \a in {-6,-4,-2,0,2,4,6} {
		\draw[fill=blue](\t+1,\t+\a+0.5) rectangle (\t+2.5,\a+1+\t);
		\draw[fill=blue](\t+3,\t+\a+0.5) rectangle (\t+4.5,\a+1+\t);
	}
};

		\foreach \t in {0,1} { 
	\foreach \a in {-6,-4,-2,0,2,4,6} {
		\draw[fill=green](\t+1,\t+\a+0.5) rectangle (\t+2.5,\a+1+\t);
\draw[fill=green](\t+3,\t+\a+0.5) rectangle (\t+4.5,\a+1+\t);
	}
};

		\end{tikzpicture}
		\quad
		\begin{tikzpicture}[scale=0.75]
		
		\clip (-0.5, -0.5) rectangle (5,6);

		\foreach \t in {-2,-1,0,1,2,3,4,5,6} { 
			\foreach \a in {-6,-4,-2,0,2,4,6} { 
				\draw[fill=black!70!white](\t,\t+\a) rectangle (\t+0.5,\a+0.5+\t);
			}
		};

		\foreach \t in {-2,-1,0,1,2,3,4,5,6} { 
			\foreach \a in {-6,-4,-2,0,2,4,6} {  
				\draw[fill=black!40!white](\t+0.5,\t+\a+0.5) rectangle (\t+1,\a+1+\t);
			}
		};
		%
		

				\foreach \t in {0,1} { 
			\foreach \a in {-6,-4,-2,0,2,4,6} {
				\draw[fill=red](\t+4.5,\t+\a) rectangle (\t+6,\a+0.5+\t);
				\draw[fill=red](\t+2.5,\t+\a) rectangle (\t+4,\a+0.5+\t);
			}
		};
		
		\foreach \t in {-2,-1} { 
			\foreach \a in {-6,-4,-2,0,2,4,6} {
				\draw[fill=yellow](\t+0.5,\t+\a) rectangle (\t+2,\a+0.5+\t);
				\draw[fill=yellow](\t+2.5,\t+\a) rectangle (\t+4,\a+0.5+\t);
			}
		};
		
		\foreach \t in {-2,-1} { 
			\foreach \a in {-6,-4,-2,0,2,4,6} {
				\draw[fill=blue](\t+1,\t+\a+0.5) rectangle (\t+2.5,\a+1+\t);
				\draw[fill=blue](\t+3,\t+\a+0.5) rectangle (\t+4.5,\a+1+\t);
			}
		};
		
		\foreach \t in {0,1} { 
			\foreach \a in {-6,-4,-2,0,2,4,6} {
				\draw[fill=green](\t+1,\t+\a+0.5) rectangle (\t+2.5,\a+1+\t);
				\draw[fill=green](\t+3,\t+\a+0.5) rectangle (\t+4.5,\a+1+\t);
			}
		};

		\end{tikzpicture}
		\quad\begin{tikzpicture}[scale=0.75]
		
		\clip (-0.5, -0.5) rectangle (5,6);

		\foreach \t in {-2,-1,0,1,2,3,4,5,6} { 
			\foreach \a in {-6,-4,-2,0,2,4,6} { 
				\draw[fill=black!70!white](\t,\t+\a) rectangle (\t+0.5,\a+0.5+\t);
			}
		};

		\foreach \t in {-4,-3,-2,-1,0,1,2,3,4,5,6} { 
			\foreach \a in {-6,-4,-2,0,2,4,6} {
				\draw[fill=black!10!white](\t+0.5,\t+\a) rectangle (\t+1,\a+0.5+\t);
			}
		};

\foreach \t in {-4,-3,-2,-1,0,1,2,3,4,5,6} { 
		\foreach \a in {0,-8} {
			\draw[fill=green](\t+0.5,\t+\a+0.5) rectangle (\t+1,\t+\a+2);
			\draw[fill=green](\t+0.5,\t+\a+2.5) rectangle (\t+1,\a+4+\t);
		}
	};

\foreach \t in {-4,-3,-2,-1,0,1,2,3,4,5,6} { 
	\foreach \a in {-4,4} {
		\draw[fill=blue](\t+0.5,\t+\a+0.5) rectangle (\t+1,\t+\a+2);
		\draw[fill=blue](\t+0.5,\t+\a+2.5) rectangle (\t+1,\a+4+\t);
	}
};

\foreach \t in {-4,-3,-2,-1,0,1,2,3,4,5,6} { 
	\foreach \a in {0,-8} {
		\draw[fill=red](\t,\t+\a+0.5) rectangle (\t+0.5,\t+\a+2);
		\draw[fill=red](\t,\t+\a+2.5) rectangle (\t+0.5,\a+4+\t);
	}
};

\foreach \t in {-4,-3,-2,-1,0,1,2,3,4,5,6} { 
	\foreach \a in {-4,4} {
		\draw[fill=yellow](\t,\t+\a+0.5) rectangle (\t+0.5,\t+\a+2);
		\draw[fill=yellow](\t,\t+\a+2.5) rectangle (\t+0.5,\a+4+\t);
	}
};

		\end{tikzpicture}		
	\end{center}
	\caption{ Partial tilings for $k \equiv 1 \pmod 2$.}
	\label{ABC_1mod2conf}	
\end{figure}

\begin{lemma}[Metrebian~\cite{HM}]\label{easycase1}
	If $2 \nmid k$, then $T=k(1)k$ tiles $\zz^3.$
\end{lemma}

\begin{proof}
	Let $A=\{(x,y) \in \zz^2 \mid  x \equiv y \pmod{k+1}, x \equiv 0 \pmod 2\}$, $B=(1,1)+A$ and $C=(1,0)+A$.
	Now one can tile $\zz^2 \setminus (A \cup B)$ horizontally or vertically, as $A \cup B$ consists of infinitely many diagonals with distance $k$ in between.
	
	One can tile $\zz^2 \setminus (A \cup C)$ by placing copies of $T$ vertical and similarly one can tile $\zz^2 \setminus (B \cup C)$ with horizontal copies of $T$. 
	The sets $A,B,C$ are presented in Figure~\ref{ABC_1mod2}, with the three partial tilings being sketched in Figure~\ref{ABC_1mod2conf}.	
	Hence the result follows from Lemma~\ref{lemma4gen}.	
\end{proof}

\begin{lemma}\label{v_2_case}
	Symmetric punctured intervals $T=k(1)k$ tile $\zz^3.$
\end{lemma}

\begin{proof}
	Let $v_2(k)=n$ and $q=2^n.$ 
	When $n=0$, the result follows from Lemma~\ref{easycase1}. So from now on, we assume $n \ge 1$.
%

Let $A \subset \zz^2$ be the sets containing the elements $(x,y)$ if and only if
$$x-y\left[ 2(k+1)(q-1)+1 \right] \equiv i(k+1) \pmod{4(k+1)q} $$
for some $0 \le i \le 2q-1$.
Let $B=(2q(k+1),0)+A$ and $C=B+(k,0).$

One can see a depiction of this in Figure~\ref{fig:v2k=v2l} in the case $q=2, n=1$ (for $k=6$ actually).

\begin{figure}[h]
\centering
	\begin{tikzpicture}[scale=0.5]		
		
	\clip (0, -1.75) rectangle (28.55,17.05);
	\foreach \a in {0,1,2,3,4,5,6,7,8,9,10,11,12,13,14,15,16,17,18,19,20,21,22,23,24,25,26,27,28,29,30,31,32,34,33,35,36,37,38,39,40,41,42,43,44,45,46,47,48} { 
		\foreach \t in {-308,-168,-196,-280,-252,-224,-112,-140,-84,-56,-28,0,28} { 
			\foreach \x in {0,1,2,3} { 
			\draw[fill=black!10!white](\t+3.5*\x+7.5*\a,0.5*\a) rectangle (\t+3.5*\x+0.5+7.5*\a,0.5*\a+0.5);}
	};
};

	\foreach \a in {0,1,2,3,4,5,6,7,8,9,10,11,12,13,14,15,16,17,18,19,20,21,22,23,24,25,26,27,28,29,30,31,32,34,33,35,36,37,38,39,40,41,42,43,44,45,46,47,48} { 
	\foreach \t in {-308,-168,-196,-280,-252,-224,-112,-140,-84,-56,-28,0,28} {  
		\foreach \x in {0,1,2,3} { 
			\draw[fill=black!40!white](14+\t+3.5*\x+7.5*\a,0.5*\a) rectangle (14+\t+3.5*\x+0.5+7.5*\a,0.5*\a+0.5);}
	};
};

	\foreach \a in {0,1,2,3,4,5,6,7,8,9,10,11,12,13,14,15,16,17,18,19,20,21,22,23,24,25,26,27,28,29,30,31,32,34,33,35,36,37,38,39,40,41,42,43,44,45,46,47,48} { 
	\foreach \t in {-308,-168,-196,-280,-252,-224,-112,-140,-84,-56,-28,0,28} {  
		\foreach \x in {0,1,2,3} { 
			\draw[fill=black!70!white](17+\t+3.5*\x+7.5*\a,0.5*\a) rectangle (17+\t+3.5*\x+0.5+7.5*\a,0.5*\a+0.5);}
	};
};

	\draw[fill=black!10!white] 			(9,-1.5) rectangle (9.5,-1);

\draw[fill=black!40!white] 			(12,-1.5) rectangle (12.5,-1);

\draw[fill=black!70!white] 			(15,-1.5) rectangle (15.5,-1);
	
	\coordinate [label=center:legend:] (A) at (7,-1.25); 
	\coordinate [label=center:$A$] (A) at (10.25,-1.25); 
	\coordinate [label=center:$B$] (A) at (13.25,-1.25); 
	\coordinate [label=center:$C$] (A) at (16.25,-1.25);

	\end{tikzpicture}

	\caption{Construction of partial planar tilings where $v_2(k)=1$}
	\label{fig:v2k=v2l}

\end{figure}

Now one can tile $\zz^2 \setminus (A \cup B)$ with $T$ as $A \cup B$ is the union of diagonals which are distance $k+1$ apart.
One can tile $ \zz^2 \setminus (A \cup C)$ horizontally as well.
For this, it is enough to tile one horizontal line as every horizontal line is a translate of that one and due to periodicity in particular the set 
\begin{align*}
&\left( \zz^2 \setminus (A \cup C) \right) \cap \left( \{0,1,\ldots, 4q(k+1)-1\} \times \{0\} \right) \\&=\left( \{0,1,\ldots, 4q(k+1)-1\} \times \{0\} \right) \setminus (A \cup C).
\end{align*}
For this, use translates of $T$ starting at $\left( 1+2i(k+1),0\right) $ for $0 \le i \le q-1$ and at $\left( 2i(k+1),0 \right)$ for $q \le i \le 2q-1.$

To finish, we note that we can tile $\zz^2 \setminus (B \cup C)$ vertically.
For this, we only have to check $ \left(\{0\} \times \zz\right) \setminus (B \cup C),$ since $\gcd \{4(k+1)q,2(k+1)(q-1)+1\}=1$ and hence every vertical line is up to some translation identical to every other vertical line.
By noting that $B$ and $C$ are subsets of some diagonals on the plane, one checks that 
$$\left(\{0\} \times \zz \right) \cap B=\{0\} \times \{y\mid y \equiv i(k+1)\pmod{4(k+1)q}, 2q \le i \le 4q-1\}.$$ 
For this, note that 
\begin{align*}
&0-i(k+1)\cdot [2(k+1)(q-1)+1] \equiv   i(k+1) \pmod{4q(k+1)}
\end{align*}
since $k \equiv q \pmod{2q}.$
Similarly one has
$$\left(\{0\} \times \zz \right) \cap C=\{0\} \times \{y\mid y \equiv i(k+1)+1 \pmod{4(k+1))q}, -1 \le i \le 2(q-1)\}.$$ 

\noindent
Hence one can tile $\left(\{0\} \times \zz\right) \setminus (B \cup C)$ by putting vertical tiles starting at 
$(0,i(2k+2)-k+1)$ for every $i \equiv 0,1,\ldots, q-1 \pmod{2q}$ and 
$(0,i(2k+2)-k)$ for every $i \equiv q,q+1,\ldots, 2q-1 \pmod{2q}$.
Hence the result follows from Lemma~\ref{lemma4gen}.
\end{proof}

\begin{lemma}\label{k1lcase}
	Asymmetric punctured intervals $T=k(1)\ell$ with $k>\ell$ tile $\zz^3.$
\end{lemma}

\begin{proof}
	Let $A=\{(x,y)\mid y\equiv x-k \pmod{k+\ell+1}, x\equiv 1,2,\ldots, k-\ell \pmod{2(k-\ell)} \}$, $B=(k-\ell,k-\ell)+A$ and $C=(k-\ell,0)+A.$

	Note that $A \cup B$ form diagonals distance $k+\ell+1$ apart and so we can tile $\zz^2 \backslash (A \cup B)$ by putting all tiles horizontal or vertical.
	One can tile $\zz^2 \backslash (A \cup C)$ vertically with copies of $T$ and	$\zz^2 \backslash (B \cup C)$ horizontally.	
	A sketch of an example is given in Figure~\ref{fig:k(1)k-1}.
\end{proof}

\begin{figure}[h]
	\begin{center}
		\begin{tikzpicture}[scale=0.75]
		
		\clip (-0.5, -0.5) rectangle (6.05,6.05);

		\foreach \t in {-2,-1,0,1,2,3,4,5,6} { 
			\foreach \a in {-6,-2,2,6} { 
				\ifthenelse{\a>-\t}{\draw[fill=black!70!white](\t,\t+\a) rectangle (\t+0.5,\a+0.5+\t);}{;}}
		};

		\foreach \t in {-2,-1,0,1,2,3,4,5,6} { 
			\foreach \a in {-6,-2,2,6} { 
				\ifthenelse{\a>-\t}{\draw[fill=black!40!white](\t+0.5,\t+\a+0.5) rectangle (\t+1,\a+1+\t);}{;}}
		};

		\foreach \t in {-4,-3,-2,-1,0,1,2,3,4,5,6} { 
			\foreach \a in {-6,-2,2,6} { 
				\ifthenelse{\a>-\t}{\draw[fill=black!10!white](\t+0.5,\t+\a) rectangle (\t+1,\a+0.5+\t);}{;}}
		};

		\draw[fill=black!70!white] 			(1.5,-0.5) rectangle (2,0);
		\draw[fill=black!40!white] 			(3,-0.5) rectangle (3.5,0);
		\draw[fill=black!10!white] 			(4.5,-0.5) rectangle (5,0);
		
		\coordinate [label=center:legend:] (A) at (0.5,-0.25); 
		\coordinate [label=center:$A$] (A) at (2.25,-0.25); 
		\coordinate [label=center:$B$] (A) at (3.75,-0.25); 
		\coordinate [label=center:$C$] (A) at (5.25,-0.25);

		\draw[<->] (0.0,0.85) -- (3.5,0.85){};

		\coordinate [label=center:$k+\ell$] (A) at (1.8,1.15);
		\end{tikzpicture}
		\quad
				\begin{tikzpicture}[scale=0.75]
				
				\clip (-0.5, -0.5) rectangle (6.05,6.05);

				\foreach \t in {-4,-3,-2,-1,0,1,2,3,4,5,6} { 
					\foreach \a in {-6,-2,2,6} { 
						\ifthenelse{\a>-\t}{\draw[fill=black!40!white](\t+0.5,\t+\a+0.5) rectangle (\t+1,\a+1+\t);}{;}}
				};

				\foreach \t in {-4,-3,-2,-1,0,1,2,3,4,5,6} { 
					\foreach \a in {-6,-2,2,6} { 
						\ifthenelse{\a>-\t}{\draw[fill=black!10!white](\t+0.5,\t+\a) rectangle (\t+1,\a+0.5+\t);}{;}}
				};

				\foreach \t in{-11,-10,-9,-8,-7,-6,-5,-4,-3,-2,-1,0,1,2,3,4,5,6,7,8,9,10,11} { 
					\foreach \a in {-2,6,14} { 
						\ifthenelse{\a>-\t}{\draw[fill=blue](\t-1.5,\t+\a) rectangle (\t+0.5,\a+0.5+\t);}{;}
						\ifthenelse{\a>-\t}{\draw[fill=blue](\t+1,\t+\a) rectangle (\t+2.5,\a+0.5+\t);}{;}
					}
				};
			
			\foreach \t in {-11,-10,-9,-8,-7,-6,-5,-4,-3,-2,-1,0,1,2,3,4,5,6,7,8,9,10,11} { 
				\foreach \a in {-6,2,10} { 
					\ifthenelse{\a>-\t}{\draw[fill=red](\t-1.5,\t+\a) rectangle (\t+0.5,\a+0.5+\t);}{;}
					\ifthenelse{\a>-\t}{\draw[fill=red](\t+1,\t+\a) rectangle (\t+2.5,\a+0.5+\t);}{;}
				}
			};

\foreach \t in{-11,-10,-9,-8,-7,-6,-5,-4,-3,-2,-1,0,1,2,3,4,5,6,7,8,9,10,11} { 
	\foreach \a in {-2,6,14} { 
		\ifthenelse{\a>-\t}{\draw[fill=yellow](\t-1.5,\t+\a+0.5) rectangle (\t+0.5,\a+1+\t);}{;}
\ifthenelse{\a>-\t}{\draw[fill=yellow](\t+1,\t+\a+0.5) rectangle (\t+2.5,\a+1+\t);}{;}
	}
};

\foreach \t in {-11,-10,-9,-8,-7,-6,-5,-4,-3,-2,-1,0,1,2,3,4,5,6,7,8,9,10,11} { 
	\foreach \a in {-6,2,10} { 
		\ifthenelse{\a>-\t}{\draw[fill=green](\t-1.5,\t+\a+0.5) rectangle (\t+0.5,\a+1+\t);}{;}
		\ifthenelse{\a>-\t}{\draw[fill=green](\t+1,\t+\a+0.5) rectangle (\t+2.5,\a+1+\t);}{;}
	}
};

				\end{tikzpicture}

	\end{center}
	\caption{ Construction of $A,B,C$ for $T=k(1)\ell$ where $k =\ell+ 1$ with a partial tiling for $\zz^2 \backslash (B \cup C)$.}
	\label{fig:k(1)k-1}	
\end{figure}

\section{Impossible tilings}\label{1a}

In this section, for the convenience of the reader, we collect two classes of (known) one-dimensional tiles of high genus that do not tile $\zz^d$ for a given $d$.

Let $T_{k,g}$ be the tile $k\underbrace{(k-1)1(k-1)1 \ldots (k-1)1(k-1)}_{g \mbox{ times } (k-1)}k$.
Note that $T_{k,k}$ was considered in~\cite{GLT}.

Let $D_n$ be the tile $\underbrace{2(1)2(1)2\ldots(1)2}_{n \mbox{ times } 2}$, 
as considered in~\cite{K}.

The following proposition shows that for every $d$, one can find $\{k,g\}$ and $n$ such that neither $D_n$ nor $T_{k,g}$ tiles $\zz^d$. The reason behind this is slightly different for the two tiles.
The first uses sparseness of tiles put in one direction.
The other considers the intersection of the tiles with subdivisions of $\zz^d$.

\begin{proposition}\label{prop:TkgDn}
	$T_{k,g}$ does not tile $\zz^d$ for $d< \frac{kg+2k-1}{2k+g-1}$ and $D_n$ does not tile $\zz^d$ for $n>3^{d-1}$.
\end{proposition}

\begin{proof}
	In the case of $T_{k,g}$, one looks to the maximum volume covered by tiles in one of the $d$ orthogonal directions in a hypercube $[N]^d.$
	When $N \to \infty$, the ratio of the volume covered by these tiles will have a limsup which is at most $\frac{2k+g-1}{kg+2k-1}$ from which the result follows as the sum of the ratios over the $d$ directions should sum to $1$.\\
	Next, we consider the $D_n.$
	We assume $D_n$ tiles $\zz^d$ and look to the intersection of this fixed tiling with a hypercube $[N]^d$.
	Look to the $3^d$ possible (infinite) partitions of $\zz^d$ in hypercubes with side length $3$. These correspond with the $3^d$ possible vertices in $\left( \frac{\zz}{3\zz} \right)^d$ that represent the corners of all the cubes in the partition. Call a nonempty intersection of $[N]^d$ with a hypercube of side length $3$ for a given partition a subregion.
	We now count the total number $\#D$ of intersections of a subregion of a partition and a $D_n$ 
	which are of size $2$ (the intersection has not necessarily to be connected), in two different ways. 
	
	For each of the $3^d$ partitions, there are less than $\left( \frac{N}{3}+2 \right)^d$ subregions. Each subregion will contain at most $\frac{3^d-1}{2}$ intersections with a $D_n$ of size $2$.
	Hence $\#D < 3^d \left( \frac{N}{3}+2 \right)^d \frac{3^d-1}{2}= \frac{3^d-1}{2} (N+6)^d$.
	
	On the other hand, there are at least $\frac{(N-6n)^d}{2n}$ $D_n$'s completely inside the hypercube.
	Every $D_n$ of these, intersects $n$ subregions in exactly $2$ places for each of $2 \cdot 3^{d-1}$ partitions. For $3^{d-1}$ partitions, these $D_n$ intersects $n-1$ small hypercubes in exactly $2$ places and $2$ small hypercubes in exactly one place.
	This implies that $\# D \ge \frac{(N-6n)^d}{2n} \cdot 3^{d-1}(3n-1)$.
	
	Hence $\frac{3^d-1}{2} (N+6)^d >\frac{(N-6n)^d}{2n} \cdot 3^{d-1}(3n-1)$ for all $N$, in particular one finds that the leading coefficients satisfy 
	$\frac{3^d-1}{2} \ge 3^{d-1}\cdot \frac{3n-1}{2n} \Rightarrow n \le 3^{d-1}.$

\end{proof}

In the case of $D_n$, this generalizes the `only if' part of Proposition~$1$ in~\cite{K}.

Let us remark that this also follows from a straightforward generalization of Theorem~$1$ in~\cite{K}, which concerns `convolutions' of tiles. In case it might be of use to others, we use the notation of~\cite{K} to state the generalization (and leave the proof to the reader) of Theorem~$1$ in~\cite{K} (where they deal with the case $n=2$ and $d=2$).

\begin{proposition}[\cite{K}]\label{AUOK}
	Suppose $T \subset \zz^n$ is a tile.
	Suppose that $S \subset \zz^d$ is a symmetric tile (i.e.~no matter how the tile is oriented, it is a translate of itself).
	Then if for some $m \in \mathbb N$ one has $\lvert 1_S \bigstar_m 1_T \rvert_1 <\lvert \overline{1_S} \rvert \lvert 1_T \rvert$, or if $\lvert 1_S \bigstar_m 1_T \rvert_{\infty} < \lvert 1_T \rvert$ and $\lvert \overline{1_S} \rvert \not =0$, then $T$ does not tile $\zz^d$. 
\end{proposition}

\subsection*{Note}

This paper is an update of an older version entitled \enquote*{Symmetric punctured intervals tile $\zz^3$}. So we like to give an idea of the changes.
In this version, we start our journey more general from Conjecture~\ref{GLT} and Question~\ref{q:genus_tiling}. By finding a simple construction for the asymmetric punctured intervals in Lemma~\ref{k1lcase}, some results in the old versions could be stated more elegantly. Other results could be removed, as they became redundant. The main result now also holds for all punctured intervals and not only symmetric intervals.

\bibliographystyle{abbrv}
\bibliography{Tiling_1Dtiles_3D}

\begin{thebibliography}{1}

\bibitem{AH}
A.~Adler and F.~C. Holroyd.
\newblock Some results on one-dimensional tilings.
\newblock {\em Geom. Dedicata}, 10(1-4):49--58, 1981.

\bibitem{GLT}
V.~Gruslys, I.~Leader, and T.~S. Tan.
\newblock Tiling with arbitrary tiles.
\newblock {\em Proc. Lond. Math. Soc. (3)}, 112(6):1019--1039, 2016.

\bibitem{K}
A.~U.~O. Kisisel.
\newblock Polyomino convolutions and tiling problems.
\newblock {\em J. Combin. Theory Ser. A}, 95(2):373--380, 2001.

\bibitem{HM}
H.~Metrebian.
\newblock Tiling with punctured intervals.
\newblock {\em Mathematika}, 65(2):181--189, 2019.

\end{thebibliography}

\appendix
\section{The tiles $k(2)\ell$ do tile $\zz^3$ as well}\label{5}

In this section, we briefly sketch the construction for the cases where the gap is of length $2$.

If $2\mid k, \ell$, four copies form a tile that is isomorphic to $\frac{k}{2}(1)\frac{\ell}{2}$ and so it is known by Theorem~\ref{main}.
When $k\equiv \ell \pmod 4$ are both odd, we have $4 \mid k+\ell+2$ and a construction analogous to Figure~\ref{ABC_1mod2} does work ($k$ has to be $k+\ell$ instead and and every square being a $2 \times 2$ square).

The remaining cases where $k$ and $\ell$ are having different parity, or both are odd and $4\mid k+\ell$, are handled in the following proposition.

\begin{proposition}
	Let $k, \ell \in \zz^+$ and $d=\gcd\{k+1,\ell+1\}$ where $k+1=dx, \ell+1=dy$ and $x+y$ is odd.
	Then $k(2)\ell$ does tile $\zz^3.$
\end{proposition}

\begin{proof}
	We start with selecting $3$ transversals in $\left( \frac{\zz}{(x+y) \zz} \right)^2$.
	Let $A'=<(-2,1)>$ be the set of elements (additively) generated by $(-2,1)$ in $\left( \frac{\zz}{(x+y) \zz} \right)^2$. So $A'$ contains exactly the pairs $(c_1,c_2)$ such that $c_1+2c_2 \equiv 0 \pmod{x+y}.$
	Since $\gcd{2x,x+y}=1$, we note that $A'$ is a transversal and that it also can be written as $<(-2x,x)>.$
	Now let $B'=(0,x)+A'$ and $C'=(-x,0)+B'=(x,0)+A'$.
	Replacing every square here by a $2d \times 2d$ square, where the sets $A,B,C$ are formed by taking the elements on $d$ $2\times 2$ blocks on the diagonals in the corresponding squares of $A',B'$ and $C'$.
	So now we have constructed subsets $A,B,C$ in $\left( \frac{\zz}{2d(x+y) \zz} \right)^2$, where $d(x+y)=k+\ell+2.$ We can consider $A,B,C$ as subsets of $\zz^2$ as well, if we consider the coordinates modulo $2(k+\ell+2).$
	By applying Lemma~\ref{lemma4gen} on these sets $A,B,C$, we conclude $k(2)\ell$ tiles $\zz^3$. 
	For this, note that $\zz^2 \setminus (A \cup B)$ and $  \zz^2 \setminus (A \cup C)$ 
	both can be tiled with copies of $T$ which are put horizontally, while $\zz^2 \setminus (B \cup C)$ can be tiled with vertical copies of $T$.
	
	As an example, the idea is illustrated in Figure~\ref{fig:k2l} for $k=8, \ell=5$, so with $d=3,x=3$ and $y=2.$ The constructions of $A',B'$ and $C'$ in $\left( \frac{\zz}{5 \zz} \right)^2$ is presented on the left, the sets $A,B,C$ in $\left( \frac{\zz}{30 \zz} \right)^2$ are depicted in the middle and part of the tiling of $\zz^2 \setminus (B \cup C)$ is shown on the right side.
\end{proof}

\begin{figure}[h]

	\begin{tikzpicture}[scale=0.9]
	
	\clip (-0.5, -0.75) rectangle (5.55,5.55);

\foreach \t in {0,1,2} { 
	\draw[fill=black!70!white](\t,2*\t) rectangle (\t+1,2*\t+1);}

\foreach \t in {0,1} { 
	\draw[fill=black!70!white](\t+3,2*\t+1) rectangle (\t+4,2*\t+2);}

\foreach \t in {0,1,2} { 
	\draw[fill=black!40!white](\t+1,2*\t) rectangle (\t+2,2*\t+1);}

\draw[fill=black!40!white](4,1) rectangle (5,2);
\draw[fill=black!40!white](0,3) rectangle (1,4);

\foreach \t in {0,1} { 
	\draw[fill=black!10!white](\t+3,2*\t) rectangle (\t+4,2*\t+1);}

\draw[fill=black!10!white](0,4) rectangle (1,5);
\foreach \t in {0,1} { 
	\draw[fill=black!10!white](\t+1,2*\t+1) rectangle (\t+2,2*\t+2);}

	\draw[fill=black!10!white] 			(1.5,-0.7) rectangle (2,-0.2);
	\draw[fill=black!40!white] 			(3,-0.7) rectangle (3.5,-0.2);
	\draw[fill=black!70!white] 			(4.5,-0.7) rectangle (5,-0.2);
	
	\coordinate [label=center:legend:] (A) at (0.5,-0.5); 
	\coordinate [label=center:$A'$] (A) at (2.25,-0.5); 
	\coordinate [label=center:$B'$] (A) at (3.75,-0.5); 
	\coordinate [label=center:$C'$] (A) at (5.25,-0.5);

	\end{tikzpicture}
	\quad
	\begin{tikzpicture}[scale=0.9]		
	
	\clip (-0.15, -0.75) rectangle (7.55,7.55);

	\foreach \t in {0,1,2} { 
		\foreach \a in {0,0.5,1} { 
		\draw[fill=black!70!white](1.5*\t+\a,3*\t+\a) rectangle (1.5*\t+\a+0.5,3*\t+\a+0.5);}
	}
	
	\foreach \t in {0,1} { 
		\foreach \a in {0,0.5,1} { 
		\draw[fill=black!70!white](1.5*\t+4.5+\a,3*\t+1.5+\a) rectangle (1.5*\t+5+\a,3*\t+2+\a);}
	}
	
	\foreach \t in {0,1,2} { 
		\foreach \a in {0,0.5,1} { 
		\draw[fill=black!40!white](1.5*\t+1.5+\a,3*\t+\a) rectangle (1.5*\t+2+\a,3*\t+\a+0.5);}
}

	\foreach \a in {0,0.5,1} { 
	\draw[fill=black!40!white](6+\a,1.5+\a) rectangle (\a+6.5,2+\a);}

	\foreach \a in {0,0.5,1} { 
	\draw[fill=black!40!white](\a,4.5+\a) rectangle (\a+0.5,\a+5);}

	\foreach \t in {0,1} { 
		\foreach \a in {0,0.5,1} { 
		\draw[fill=black!10!white](3/2*\t+9/2+\a,3*\t+\a) rectangle (3/2*\t+5+\a,3*\t+\a+0.5);}
	}

	\foreach \a in {0,0.5,1} { 
		\draw[fill=black!10!white](\a,6+\a) rectangle (\a+0.5,6+\a+0.5);}

	\foreach \t in {0,1} {
		\foreach \a in {0,0.5,1} { 
	\draw[fill=black!10!white](\t*3/2+3/2+\a,\t*3+3/2+\a) rectangle (\t*3/2+3/2+\a+0.5,\t*3+3/2+\a+0.5);}
	}


	\draw[fill=black!10!white] 			(1.5,-0.7) rectangle (2,-0.2);
	
	\draw[fill=black!40!white] 			(3.5,-0.7) rectangle (4,-0.2);
	
	\draw[fill=black!70!white] 			(5.5,-0.7) rectangle (6,-0.2);
	
	\coordinate [label=center:legend:] (A) at (0.5,-0.5); 
	\coordinate [label=center:$A$] (A) at (2.25,-0.5); 
	\coordinate [label=center:$B$] (A) at (4.25,-0.5); 
	\coordinate [label=center:$C$] (A) at (6.25,-0.5);
	
		\draw[step=0.25,black,thin] (0,0) grid (7.5,7.5);

	\end{tikzpicture}
	\quad 
	\begin{tikzpicture}[scale=0.9]		
	
	\clip (-0.75, -0.75) rectangle (1.1,7.55);

	\foreach \a in {0,0.5} { 
		\draw[fill=black!70!white](\a,\a) rectangle (\a+0.5,\a+0.5);}

	\foreach \a in {0,0.5} { 
		\draw[fill=black!40!white](\a,4.5+\a) rectangle (\a+0.5,\a+5);}

	\draw[fill=blue](0,0.5) rectangle (0.25,2.5);
	\draw[fill=blue](0,6.25) rectangle (0.25,7.5);
	
	\draw[fill=red](0,2.5) rectangle (0.25,4.5);	
	\draw[fill=red](0,5) rectangle (0.25,6.25);
	
	\draw[fill=green](0.5,0.5) rectangle (0.25,2.5);
	\draw[fill=green](0.5,6.25) rectangle (0.25,7.5);
	
	\draw[fill=yellow](0.5,2.5) rectangle (0.25,4.5);	
	\draw[fill=yellow](0.5,5) rectangle (0.25,6.25);
	
	\draw[fill=blue](0.5,0) rectangle (0.75,0.5);
	\draw[fill=blue](0.5,1) rectangle (0.75,3);
	\draw[fill=blue](0.5,6.75) rectangle (0.75,7.5);
	
	\draw[fill=red](0.5,3) rectangle (0.75,5);	
	\draw[fill=red](0.5,5.5) rectangle (0.75,6.75);
	
	\draw[fill=green](1,0) rectangle (0.75,0.5);
	\draw[fill=green](1,1) rectangle (0.75,3);
	\draw[fill=green](1,6.75) rectangle (0.75,7.5);
	
	\draw[fill=yellow](0.75,3) rectangle (1,5);	
	\draw[fill=yellow](0.75,5.5) rectangle (1,6.75);
	
	\draw[step=0.25,black,thin] (0,0) grid (1,7.5);
	
	\end{tikzpicture}

	\caption{Construction of $A,B,C$ for $T=8(2)5$}
	\label{fig:k2l}
	
\end{figure}

\end{document}